\newtheorem{theorem}{Theorem}
\newtheorem*{question}{Question}
\newtheorem{prop}[theorem]{Proposition}
\theoremstyle{definition}
\newcommand\nc{\newcommand}
\nc\qf[1]{\bigl<#1\bigr>}
\nc\qfset[1]{\mathcal{#1}}
\nc\card[1]{\left\vert #1\right\vert}
\nc\s{\qfset{S}}
\nc\marker{*}
\nc\defeq{:=}
\nc\ZZ{{\mathbb{Z}}}
\nc\NN{{\mathbb{N}}}
\nc\QQ{{\mathbb{Q}}}
\nc\ra{{\rightarrow}}
\nc\Llra{{\Longleftrightarrow}}
\nc\Union{{\mathop{U}\nolimits}}
\nc\Prod{{\mathop{\sf P}\nolimits}}
\title[Minimal $\s$-Universality Criteria May Vary in Size]{Minimal $\s$-Universality Criteria\\May Vary in Size}
\author{Noam D. Elkies}
\address{Department of Mathematics, Harvard University\newline\indent
   One Oxford Street\newline\indent Cambridge, MA 02138}
\email{elkies@math.harvard.edu}
\author{Daniel M. Kane}
\address{Department of Mathematics, Harvard University\newline\indent
   One Oxford Street\newline\indent Cambridge, MA 02138}
\email{dankane@math.harvard.edu, aladkeenin@gmail.com}
\author{Scott Duke Kominers}
\address{Department of Economics, Harvard University,
  and Harvard Business School\newline \indent
  Wyss Hall, Harvard Business School\newline\indent
  Soldiers Field, Boston, MA 02163 }
\email{kominers@fas.harvard.edu, skominers@gmail.com}
\subjclass[2000]{11E20, 11E25}
\keywords{universality criteria, quadratic forms}
\begin{document}
\begin{abstract}
In this note, we give simple examples of sets $\s$ of quadratic
forms that have minimal $\s$-universality criteria of multiple cardinalities.
This answers a question of Kim, Kim, and Oh~\cite{Kim:finite} in
the negative.
\end{abstract}
\maketitle

A quadratic form $Q$ \textit{represents} another quadratic form $L$
if there exists a $\ZZ$-linear, bilinear form-preserving
injection $L \to Q$.  In this note, we consider only positive-definite
quadratic forms, and assume unless stated otherwise that every form
is classically integral (equivalently:~has a Gram matrix
with integer entries).  For a set $\s$ of such forms, a quadratic
form is called \textit{(classically) $\s$-universal} if it represents
all quadratic forms in $\s$.

Denote by $\NN$ the set $\{1,2,3,\ldots\}$ of natural numbers.
In 1993, Conway and Schneeberger (see~\cite{Bhargava:Fif,Conway:universality})
proved the ``Fifteen Theorem'':
$\{ax^2:a\in\NN\}$-universal forms can be exactly characterized
as the set of forms which represent all of the forms in the finite set
$\{x^2,2x^2,3x^2,5x^2,6x^2,7x^2,10x^2,14x^2,15x^2\}$.
This set is thus said to be a ``criterion set'' for $\{ax^2:a\in\NN\}$.
In general, for a set $\s$ of quadratic forms of bounded rank,
a form~$Q$\/ is said to be \emph{$\s$-universal}\/
if it represents every form in~$\s$;
an \emph{$\s$-criterion set} is a subset $\s_\marker\subset \s$
such that every $\s_\marker$-universal form is $\s$-universal.  
Following the Fifteen Theorem, Kim, Kim, and Oh~\cite{Kim:finite} proved
that, surprisingly, finite $\s$-universality criteria exist in general. 
\begin{theorem}[Kim, Kim, and Oh~\cite{Kim:finite}]
Let $\s$ be any set of quadratic forms of bounded rank.  Then, there
exists a finite $\s$-criterion set.
\end{theorem}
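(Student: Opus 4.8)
The plan is to show that every failure of $\s$-universality is already detected by a finite subfamily of $\s$. Representation of quadratic forms is transitive --- if $Q$ represents $L'$ and $L'$ represents $L$, then composing the two injections shows $Q$ represents $L$ --- so for the universal sets $\mathcal{U}(L) \defeq \{Q : Q \text{ represents } L\}$, a subset $\s_\marker \subseteq \s$ is an $\s$-criterion set precisely when $\bigcap_{L\in\s_\marker}\mathcal{U}(L)=\bigcap_{L\in\s}\mathcal{U}(L)$. Thus the theorem is equivalent to the assertion that this intersection stabilizes after finitely many terms. I would prove this by ruling out an infinite ``independent'' sequence $L_1, L_2, \ldots \in \s$ for which, at every stage $k$, some form $Q_k$ represents $L_1,\ldots,L_k$ yet fails to represent $L_{k+1}$: a greedy construction makes the nonexistence of such a sequence exactly equivalent to the existence of a finite $\s$-criterion set.

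To organize $\s$, I would first invoke reduction theory: for each rank $r\le n$ (with $n$ bounding the ranks appearing in $\s$) and each discriminant value, there are only finitely many equivalence classes of positive-definite integral forms, and equivalent forms represent exactly the same forms. Consequently, for each bound $d$ the forms in $\s$ of discriminant at most $d$ fall into finitely many classes, and selecting one representative of each yields a finite subset $F_d \subseteq \s$. The theorem then reduces to the claim that $F_d$ is an $\s$-criterion set once $d \gg 0$ --- equivalently, that any $Q$ representing every form in $\s$ of small discriminant automatically represents those of large discriminant. A natural route is induction on the rank bound $n$: a rank-$n$ form $L \in \s$ of large discriminant contains a rank-$(n-1)$ sublattice of strictly smaller discriminant, so one hopes to use that $Q$ represents the small-discriminant forms of $\s$ to represent this lower-rank piece and then extend that representation to $L$ by accommodating one further lattice vector of large norm.

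The main obstacle is precisely this forcing of the large-discriminant tail, and I expect it to be genuinely arithmetic rather than combinatorial. It is tempting to argue purely order-theoretically --- for instance by claiming that forms of bounded rank are well-quasi-ordered under representation --- but this cannot succeed: under representation the squarefree rank-one forms $\{d\,x^2 : d\ \text{squarefree}\}$ form an infinite antichain, and $x^2 \succ 4x^2 \succ 16x^2 \succ \cdots$ an infinite descending chain, so neither the representation order nor its reverse is a well-quasi-order in any rank. Hence the finiteness cannot arise from the combinatorics of embeddings among the $L_i$ alone; it must come from the fact that a single $Q$ representing enough small-discriminant forms is thereby forced, through local--global constraints (its behaviour over each $\ZZ_p$ and over $\mathbb{R}$ being pinned down by bounded-discriminant data), to represent everything else --- exactly as in the Fifteen Theorem, where representing $1,2,3,5,6,7,10,14,15$ forces representation of every $ax^2$. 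Making this forcing uniform across all of $\s$, in particular bounding the rank and local invariants of a putative obstruction $Q_k$ so that the large-discriminant forms are automatically caught, is the crux of the argument and the step I anticipate being hardest.
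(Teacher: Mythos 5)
First, a point of order: the paper does not prove this theorem at all --- it is quoted as background from Kim, Kim, and Oh \cite{Kim:finite} --- so there is no internal proof to compare against, and your proposal must stand on its own merits. On those merits, the setup is sound: the greedy reformulation (if no finite criterion set exists, one can recursively choose $L_1, L_2, \ldots \in \s$ and forms $Q_k$ representing $L_1, \ldots, L_k$ but missing $L_{k+1}$) is the right starting point and is indeed how the finiteness is attacked in \cite{Kim:finite}; the appeal to reduction theory (finitely many classes of bounded rank and discriminant) is correct; and your observation that representability is not a well-quasi-order --- the squarefree antichain $\{d\,x^2\}$ and the descending chain $x^2 \succ 4x^2 \succ 16x^2 \succ \cdots$ --- is accurate and correctly rules out any purely order-combinatorial proof. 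But the proposal stops exactly where the theorem begins. Reducing the statement to ``any $Q$ representing every member of $\s$ of discriminant at most $d$ represents all of $\s$, for $d \gg 0$'' is essentially a restatement of the theorem (since any superset of a criterion set is a criterion set, and any finite subset of $\s$ has bounded discriminant); you then explicitly defer this step as ``the crux'' and ``the step I anticipate being hardest.'' Nothing beyond the trivial equivalences has been established.

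Moreover, both routes you sketch toward that crux are defective. The induction on rank does not parse as stated: a rank-$(n-1)$ sublattice of a large-discriminant $L \in \s$ need not itself lie in $\s$, so the hypothesis that $Q$ represents the small-discriminant members of $\s$ gives no control over that sublattice; the induction would have to be formulated for a derived set of lower-rank lattices, and the ``accommodate one further lattice vector of large norm'' step is precisely the unproved forcing. Worse, the local--global heuristic you lean on in the final paragraph is known to fail for positive-definite forms: a definite form can represent a number (or a form) over $\mathbb{R}$ and over every $\ZZ_p$ and still fail to represent it over $\ZZ$ --- for instance $x^2+5y^2$ represents $3$ locally everywhere but not globally. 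This failure of the local--global principle is exactly why the Fifteen Theorem requires the escalation method with case analysis, and why the Kim--Kim--Oh finiteness theorem requires its own delicate argument (an induction on rank in which reduction theory and successive minima are used to show the greedy sequence must terminate) rather than a computation of local invariants. In short: the framing and the negative observations are correct, but the arithmetic heart of the proof is absent, so the theorem remains unproved.
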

Kim, Kim, and Oh~\cite{Kim:finite} observed that there
may be multiple $\s$-criterion sets $\s_\marker\subset\s$ which
are \emph{minimal} in the sense that for each $L\in \s_\marker$
there exists a $Q$ that is $(\s_\marker\setminus\{L\})$-universal
but not $\s$-universal.\footnote{Kim, Kim, and Oh~\cite{Kim:finite}
  gave a simple example of a set of quadratic forms $\s$ with
  multiple minimal $\s$-criterion sets:
  $$
  \s=\left\{\qf{2^i}\oplus\qf{2^j}\oplus\qf{2^k}:0\leq
  i,j,k\in \ZZ\right\},
  $$
  which has $\s$-criterion sets
  $\left\{\qf{1}\oplus\qf{1}\oplus\qf{1},\qf{1}\oplus\qf{1}\oplus\qf{2}\right\}$
  and
  $\left\{\qf{1}\oplus\qf{1}\oplus\qf{1},\qf{2}\oplus\qf{2}\oplus\qf{2}\right\}$.
  }
Given this observation, they asked (and speculated to be difficult)
the following question:
\begin{question}[Kim, Kim, and Oh~\cite{Kim:finite}; Kim~\cite{kim2004recent}]
Is it the case that for all sets $\s$ of quadratic forms (of bounded
rank), all minimal $\s$-criterion sets have the same cardinality?
Formally, is $$\card{\s_\marker}= \card{\s_\marker'}$$ for all
minimal $\s$-criterion sets $\s_\marker$ and $\s_\marker'$?
\end{question}
In this brief note, we give simple examples that answer this
question in the negative.
In each case we choose some quadratic form~$A$,
and let $\s$ be the set of quadratic forms represented by~$A$,
so that $\s_\marker = \{A\}$ is a minimal $\s$-criterion set. 
We then exhibit one or more $\s_\marker'\subset \s$ that are 
finite but of cardinality $2$ or higher,
and prove that $\s_\marker'$ is also a minimal $\s$-criterion set.

We first give an example where $A$ is diagonal of rank~$3$
and $\s_\marker'$ consists of
one diagonal form of rank~$2$ and one of rank~$3$.
We then give even simpler examples of higher rank where each
$L \in \s_\marker'$ has rank smaller than that of~$A$,
often with $A = \oplus_{L \in \s_\marker'} L$.

It will at times be convenient to switch from the terminology of
quadratic forms to the equivalent notions for lattices;
we shall do this henceforth without further comment.
For example we identify the form $\qf{1}$ with the lattice~$\ZZ$.

\section*{An Example of Rank $3$}

Let $A\defeq\qf{1}\oplus \qf{1} \oplus \qf{2}$, be the quadratic
form that is the orthogonal direct sum of two copies of the form
$\qf{1}$ and one copy of the form $\qf{2}$.
Let $B\defeq\qf{1}\oplus \qf{1}$ and
$C\defeq\qf{2}\oplus \qf{2}\oplus \qf{2}$.
Let $\s$ be the set of quadratic forms represented by~$A$.

\begin{theorem}\label{thm:the}
Both $\{A\}$ and $\{B,C\}$ are minimal $\s$-criterion sets.
\end{theorem}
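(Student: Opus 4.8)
The statement for $\{A\}$ is essentially a tautology, so I would dispose of it first: since $\s$ is \emph{defined} as the set of forms represented by $A$, a form $Q$ represents every element of $\s$ precisely when it represents $A$ (transitivity of representation gives one direction, and $A\in\s$ the other), so $\{A\}$-universality and $\s$-universality coincide; and $\{A\}$ is minimal because deleting $A$ leaves the empty criterion, which every form satisfies although, say, $B$ does not represent $A$. The content of the theorem is thus the assertion about $\{B,C\}$, and I would split its proof into three parts: (i)~$\{B,C\}\subseteq\s$; (ii)~$\{B,C\}$ is a criterion set; and (iii)~$\{B,C\}$ is minimal.

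Parts (i) and (iii) I expect to be short. For (i), $B=\qf{1}\oplus\qf{1}$ is visibly a direct summand of $A$, while for $C=\qf{2}\oplus\qf{2}\oplus\qf{2}$ one simply exhibits three mutually orthogonal norm-$2$ vectors in $A$, e.g.\ $(1,1,0)$, $(1,-1,0)$, $(0,0,1)$. For (iii) one needs, upon deleting each of $B$ and $C$, a form representing the surviving element but not $A$; here the two members themselves serve as witnesses. The form $Q=B$ represents $B$ but cannot represent the rank-$3$ form $A$ for reasons of rank, and $Q=C=\qf{2}^{\oplus 3}$ represents $C$ but not $A$ because every nonzero vector of $C$ has even norm whereas $A$ contains norm-$1$ vectors.

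The heart of the matter is part (ii): if a (classically integral, positive-definite) lattice $Q$ represents both $B$ and $C$, then $Q$ represents $A$. The plan is a projection argument. Fix in $Q$ an orthonormal pair $e_1,e_2$ realizing $B$, let $P=\QQ e_1+\QQ e_2$ be the rational plane they span, and fix mutually orthogonal norm-$2$ vectors $f_1,f_2,f_3$ realizing $C$. Since $A=B\oplus\qf{2}$, it suffices to locate a \emph{single} norm-$2$ vector orthogonal to both $e_1$ and $e_2$: adjoining it to $e_1,e_2$ produces the desired copy of $A$. Writing $p_j$ for the orthogonal projection of $f_j$ onto $P$, classical integrality makes $p_j=a_je_1+b_je_2$ with $a_j=f_j\cdot e_1,\ b_j=f_j\cdot e_2\in\ZZ$, so the residual $\bar f_j\defeq f_j-p_j$ lies in the orthogonal-complement lattice $M\defeq P^{\perp}\cap Q$ and has norm $|\bar f_j|^2=2-(a_j^2+b_j^2)\geq 0$; hence $a_j^2+b_j^2\in\{0,1,2\}$ for each $j$.

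The key quantitative input, and the step I expect to be the main obstacle, is the bound $\sum_j(a_j^2+b_j^2)\leq 4$. I would derive it from a trace estimate: writing $f_j=\sqrt2\,u_j$ with $u_1,u_2,u_3$ orthonormal, one has $\sum_j|p_j|^2=2\sum_j|\pi_P u_j|^2=2\sum_j(\pi_P u_j\cdot u_j)$, and completing $u_1,u_2,u_3$ to an orthonormal basis shows $\sum_j(\pi_P u_j\cdot u_j)\leq\operatorname{tr}\pi_P=\dim P=2$. With this in hand the proof finishes by a short case analysis on the profile of the projection norms $a_j^2+b_j^2$. If some $f_j$ is orthogonal to $P$ (projection norm $0$), then $\bar f_j=f_j$ already has norm $2$ and we are done. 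Otherwise each projection has norm $1$ or $2$, and the bound leaves only the profiles $(1,1,1)$ and $(1,1,2)$. The profile $(1,1,2)$ is impossible, since a norm-$2$ projection forces some $f_j=\pm e_1\pm e_2\in P$, and then orthogonality to another $f_i$ having a norm-$1$ (hence axis-aligned) projection cannot hold over $\ZZ$. In the surviving profile $(1,1,1)$ every $p_j$ is one of $\pm e_1,\pm e_2$; two of the three must lie on perpendicular axes, because the alternative---all three on one axis---would force $\bar f_1,\bar f_2,\bar f_3$ to be mutually $\pm$-parallel and hence confine $f_1,f_2,f_3$ to the plane $\QQ e_1+\QQ\bar f_1$, contradicting their linear independence. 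For the two indices $i,j$ with perpendicular $p_i,p_j$, the residuals $\bar f_i,\bar f_j$ are then orthogonal norm-$1$ vectors in $M$, so $\bar f_i+\bar f_j$ is the required norm-$2$ vector orthogonal to $P$, completing the plan.
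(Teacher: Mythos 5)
Your proof is correct, but it takes a genuinely different route from the paper's. The paper argues by contradiction from the $C$ side: its key observation is that any norm-$2$ vector of $Q$ having nonzero inner product with some norm-$1$ vector $w$ splits as $w + (v-w)$, a sum of two orthogonal norm-$1$ vectors; assuming $Q$ represents $B$ and $C$ but not $A$, one of the three orthogonal norm-$2$ vectors, say $u$, must so split as $x+y$ (else it would be orthogonal to the copy of $B$, giving $A$ at once), and a short computation forces each of the other two, $v$ and $w$, to equal $\pm(x-y)$, contradicting $(v,w)=0$. You instead work from the $B$ side: you fix the plane $P$ of an orthonormal pair, project the $C$-copy onto it, and exploit integrality of the coefficients together with the trace bound $\sum_j |p_j|^2 \le 2\operatorname{tr}\pi_P = 4$ to reduce to the projection profiles $(0,\cdot,\cdot)$, $(1,1,1)$, $(1,1,2)$, finishing by explicitly constructing a norm-$2$ vector orthogonal to $P$. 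Each step of your case analysis checks out: the residuals $\bar f_j$ do lie in the lattice $Q$ because the projections $p_j$ are integer combinations of $e_1,e_2$; the profile $(1,1,2)$ is correctly killed by integrality of $f_i\cdot f_j$; and in profile $(1,1,1)$ the "all on one axis" alternative is correctly excluded by linear independence of $f_1,f_2,f_3$. What the paper's argument buys is brevity: its splitting lemma replaces your trace inequality and case analysis with a few lines of sign bookkeeping. What yours buys is that it is constructive (it outputs the embedding of $A$ rather than a contradiction), and its engine---orthogonal projection, integrality, and a norm bound---is precisely the mechanism the paper later uses in Proposition~\ref{thm:prop2} for the higher-rank examples, so your method connects the rank-$3$ example to those generalizations more directly than the paper's own rank-$3$ argument does.
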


Theorem~\ref{thm:the} provides an example of two minimal
$\s$-criterion sets of different cardinalities.

\begin{proof}[Proof of Theorem~\ref{thm:the}]
Clearly, $\{A\}$ is a minimal $\s$-criterion set.  Moreover, it
is clear that while $B,C\in \s$, neither $\{B\}$ nor $\{C\}$ is
an $\s$-criterion set since neither $B$ nor $C$ can embed~$A$.
It therefore only remains to show that $\{B,C\}$ is an $\s$-criterion
set.  To show this, it suffices to prove that any quadratic form $Q$\/
that represents both $B$ and $C$ also represents~$A$.

First, we note that any vector $v$ of norm $2$ in an integer-matrix
quadratic form $Q$ which is not a sum of two orthogonal $Q$-vectors
of norm $1$ must be orthogonal to all $Q$-vectors of norm $1$.
Indeed, if $v,w\in Q$, $(v,v)=2$, $(w,w)=1$, and $(v,w)\neq 0$, then
we may assume that $(v,w)=1$
(by Cauchy-Schwarz, $(v,w)$ is either $1$ or $-1$, and in the
latter case we may replace $w$ by $-w$).  Then $v=w+(v-w)$, where
$w$ and $v-w$ are orthogonal vectors of norm $1$.

Suppose for sake of contradiction that $Q$ is a quadratic form
that represents $B$ and $C$ but not $A$.  Since $Q$ represents
$B$ but not $A$, there is no norm-$2$ vector of $Q$ orthogonal to
all norm-$1$ vectors of $Q$.  Since $Q$ represents $C$, it must
contain three orthogonal norm-$2$ vectors, $u$, $v$, and $w$.
By the above observation, we may write $u$ as a sum of norm-$1$ vectors,
say $u=x+y$ for some orthogonal norm-$1$ vectors $x,y\in Q$.

Now, each of $v$ and $w$ is orthogonal to $u$ but not orthogonal
to both $x$ and $y$ (since otherwise we could embed $A$ as the span
of $\{x,y,v\}$ or $\{x,y,w\}$).  We claim that this implies that
both $v$ and $w$ are of the form $\pm(x-y)$:  Since $v$ is not orthogonal
to both $x$ and $y$, we may assume without loss of generality that
$v$ is not orthogonal to $x$.  Perhaps replacing $v$ with $-v$,
we may assume that $(v,x)=1$.  We then have $v=x+z$ for some
unit vector $z$ orthogonal to $x$.  We have
$$
0=(u,v) = (x+y,x+z) = (x,x) + (x,z) + (y,x) + (y,z) = 1 + (y,z),
$$
hence $(y,z)=-1$.  Since both $y$ and $z$ are unit vectors, this
implies that $z=-y$, hence $v=x-y$.  An analogous argument shows
that $w$ is of the form $\pm(x-y)$.

Finally, if both $v$ and $w$ are of the form $\pm(x-y)$, then
$(v,w) \in \{2, -2\}$,
contradicting the fact that $v$ and $w$ are orthogonal.\end{proof}

\section*{Examples of Higher Rank}

We begin with a simple example in rank~$9$. We give two proofs of
the correctness of this example,
each of which suggests a different generalization.

\begin{prop}\label{thm:prop1}
Let $A = E_8 \oplus \ZZ$, and let
$\s$ be the set of quadratic forms represented by~$A$.
Then both $\{A\}$ and $\{E_8,\ZZ\}$ are minimal $\s$-criterion sets.
\end{prop}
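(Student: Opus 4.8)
The plan is to handle minimality and the trivial inclusions at once, then concentrate on the single substantive claim: that any form representing both $E_8$ and $\ZZ$ must already represent $A=E_8\oplus\ZZ$. That $\{A\}$ is a minimal $\s$-criterion set is immediate, since $\s$ is by definition the set of forms represented by $A$ and representation is transitive. For $\{E_8,\ZZ\}$ the inclusion $E_8,\ZZ\in\s$ is obvious, and minimality is a matter of rank: the form $E_8$ itself (rank $8$) represents $E_8$ but cannot represent the rank-$9$ form $A$, while $\ZZ$ (rank $1$) represents $\ZZ$ but likewise cannot represent $A$. Hence neither $\{E_8\}$ nor $\{\ZZ\}$ is an $\s$-criterion set, so $\{E_8,\ZZ\}$ is minimal once we know it is a criterion set at all.

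It therefore remains to show that $\{E_8,\ZZ\}$ is an $\s$-criterion set; since $A$ represents every form in $\s$, it suffices to prove that any $Q$ representing both $E_8$ and $\ZZ$ also represents $A$. So I would suppose that $Q$ contains a sublattice $M\cong E_8$ together with a vector $v$ of norm $1$, and work in the real span $V\defeq M\otimes\mathbb{R}$, writing $v=v_V+v_\perp$ with $v_V$ the orthogonal projection of $v$ onto $V$ and $v_\perp$ perpendicular to $V$. The goal is to show that $v$ may in fact be taken orthogonal to $M$, for then $M\oplus\ZZ v\cong E_8\oplus\ZZ=A$ embeds in $Q$.

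This orthogonality step is the crux, and it is exactly where the special structure of $E_8$ enters. For every $m\in M$ we have $(v_V,m)=(v,m)\in\ZZ$, because $Q$ is integral, so $v_V$ lies in the dual lattice $M^*$; since $E_8$ is unimodular, $M^*=M$, and hence $v_V\in M$. Because $E_8$ is even, $(v_V,v_V)$ is then a nonnegative even integer, yet $1=(v,v)=(v_V,v_V)+(v_\perp,v_\perp)$ with $(v_\perp,v_\perp)\ge 0$ forces $(v_V,v_V)\le 1$; positive-definiteness now gives $v_V=0$. Thus $v=v_\perp$ is orthogonal to $M$ and has norm $1$, completing the embedding of $A$ into $Q$.

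The only genuine obstacle is this orthogonality claim, and the argument isolates the two features of $E_8$ responsible for it: unimodularity, which pins the projection $v_V$ inside $M$, and evenness, which makes an even norm incompatible with the odd norm $1$ unless $v_V$ vanishes. I expect this to be one of the two promised proofs, suggesting the generalization in which $E_8$ is replaced by an arbitrary even unimodular lattice; a second proof might instead argue more directly that the orthogonal complement of $M$ in $Q$ is forced to contain a unit vector, pointing toward a different family of examples.
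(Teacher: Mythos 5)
Your proof is correct and takes essentially the same route as the paper's first argument: project the unit vector orthogonally onto $E_8\otimes\QQ$, use self-duality to conclude the projection lies in $E_8$, and use the norm bound to force it to vanish---the paper quotes the minimal norm~$2$ of $E_8$, which is exactly the evenness fact you invoke. The paper also records a second proof (via unique decomposition of positive-definite lattices into indecomposable summands), which leads to its Proposition~\ref{thm:prop3}, much as you anticipated at the end.
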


\begin{proof}
As in the proof of Theorem~\ref{thm:the}, we need only prove that
any quadratic form~$Q$\/ that represents both $E_8$ and $\ZZ$
also represents $E_8 \oplus \ZZ$.

\emph{First argument.}  Fix a copy of~$E_8$ in~$Q$.  Choose any 
copy of~$\ZZ$ in~$Q$, that is,  any vector~$v\in Q$ with $(v,v)=1$.
Let $\pi: Q \ra E_8 \otimes \QQ$ be orthogonal projection.  Then,
$(\pi(v),w) = (v,w) \in \ZZ$ for all $w \in E_8$, so $\pi(v) \in E_8^*$.
But $E_8$ is self-dual, and has minimal norm~$2$.  Since
$(\pi(v),\pi(v)) \leq (v,v)$, it follows that $\pi(v)=0$,
that is, $v$ is orthogonal to $E_8$.  Hence $Q$ contains $E_8 \oplus \ZZ$
as claimed.

\emph{Second argument.}  Since $E_8$ and $\ZZ$ are unimodular,
they are direct summands of~$Q$ (again because $\pi(v) \in E_8$ for all
$v \in Q$, and likewise for the projection to $\ZZ \otimes \QQ$).
But $E_8$ and $\ZZ$ are indecomposable, and any positive-definite
lattice is uniquely the direct sum of indecomposable summands.
Hence $Q = \oplus_k Q_k$ for some indecomposable $Q_k \subset Q$,
which include $E_8$ and $\ZZ$, so again we conclude that
$Q$ represents $E_8 \oplus \ZZ$.
\end{proof}

The first argument for Proposition~\ref{thm:prop1} generalizes as follows.

\begin{prop}\label{thm:prop2}
Let $A = L \oplus L'$, where $L'$ is generated by vectors $v_i$
of norms $(v_i,v_i)$ less than the minimal norm of vectors in
the dual lattice\footnote{
  This dual lattice is the only lattice we consider
  that might fail to be classically integral.
  }~$L^*$.
Let $\s$ be the set of quadratic forms represented by~$A$.
Then, both $\{A\}$ and $\{L,L'\}$ are minimal $\s$-criterion sets.
\end{prop}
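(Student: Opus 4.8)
The plan is to transcribe the first argument given for Proposition~\ref{thm:prop1}, since the roles of $E_8$ and $\ZZ$ there are played here by $L$ and $L'$, and the ``self-dual, minimal norm~$2$'' input used there is exactly the present hypothesis relating the norms of the generators $v_i$ to the minimal norm of $L^*$. As in the earlier arguments, the minimality of $\{A\}$ is immediate. For $\{L,L'\}$ I would verify minimality by noting that $L,L'\in\s$ (each is a summand of~$A$), while $Q=L$ represents $L$ but not $A$ and $Q=L'$ represents $L'$ but not $A$ --- in each case because $A$ has strictly larger rank than the given $Q$. It then remains only to prove that $\{L,L'\}$ is an $\s$-criterion set, i.e. that any $Q$ representing both $L$ and $L'$ also represents $A=L\oplus L'$.

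To do this, I would fix a copy of $L$ inside~$Q$ together with a copy of $L'$ inside~$Q$, and let $\pi\colon Q \ra L\otimes\QQ$ be orthogonal projection onto the rational span of~$L$. The key step is to show that each generator $v_i$ of $L'$ is orthogonal to~$L$. Indeed, for every $w\in L$ we have $(\pi(v_i),w)=(v_i,w)\in\ZZ$, so $\pi(v_i)\in L^*$. Since orthogonal projection in a positive-definite lattice does not increase norms, $(\pi(v_i),\pi(v_i))\leq (v_i,v_i)$, which by hypothesis is strictly smaller than the minimal nonzero norm attained in~$L^*$. Hence $\pi(v_i)$ is a vector of $L^*$ whose norm lies below that minimum, forcing $\pi(v_i)=0$; that is, $v_i$ is orthogonal to~$L$.

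Because the $v_i$ generate $L'$ and $\pi$ is linear, $\pi$ then annihilates all of~$L'$, so the chosen copy of $L'$ is orthogonal to the chosen copy of~$L$. Therefore $Q$ contains $L\oplus L' = A$, completing the proof that $\{L,L'\}$ is an $\s$-criterion set.

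I expect the only genuine subtlety to be the reason the hypothesis is stated in terms of the generators $v_i$ rather than the minimal norm of $L'$ itself: the projection of an \emph{arbitrary} vector of $L'$ need not land below the minimal norm of~$L^*$, but the projection of each generator does, and linearity of $\pi$ then propagates orthogonality to the whole lattice. Everything else is a direct adaptation of the $E_8\oplus\ZZ$ argument, with ``self-dual of minimal norm~$2$'' replaced by the quantitative norm hypothesis on~$L^*$, so I anticipate no further obstacle.
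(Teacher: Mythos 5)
Your proposal is correct and follows exactly the paper's own argument: project onto $L\otimes\QQ$, note $\pi(v_i)\in L^*$ with $(\pi(v_i),\pi(v_i))\leq (v_i,v_i)$ below the minimal nonzero norm of $L^*$, conclude $\pi(v_i)=0$, and let linearity carry orthogonality to all of $L'$. The minimality verification via rank and the remark about why the hypothesis is phrased on generators are both sound and consistent with what the paper leaves implicit.
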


\begin{proof}
As before, it is enough to show that if $Q$\/ represents both
$L$ and~$L'$ then it represents $L \oplus L'$.
Let $\pi$ be the orthogonal projection to $L \otimes \QQ$.
Then $\pi(v_i) \in L^*$ for each~$i$, whence $\pi(v_i) = 0$ because 
$$(\pi(v_i),\pi(v_i))\leq (v_i,v_i)<\min_{{v\in L^*}\atop{v\neq 0}}(v,v).$$ 
Thus, the copy of~$L'$ generated by the $v_i$ is orthogonal to~$L$. 
This gives the desired representation of $L \oplus L'$ by~$Q$.
\end{proof}

\emph{Examples.}  We may take $L' = \ZZ^n$
for any $n \in \NN$, and $L \in\{ E_6, E_7, E_8\}$;
choosing $L=E_6$ and $n=1$ gives an example of rank~$7$,
the smallest we have found with this technique.  We may also
take $L$ to be the Leech lattice; then $L'$ can be any lattice
generated by its vectors of norms~$1$, $2$, and~$3$.  
There are even examples with neither $L$ nor $L'$ unimodular.
Indeed, such examples may have arbitrarily large discriminants.
For instance, let $\Lambda_{23}$ be the laminated lattice of rank~$23$
(the intersection of the Leech lattice with the orthogonal complement of
one of its minimal vectors);
this is a lattice of discriminant~$4$ and minimal dual norm~$3$.
So we can take $L = \Lambda_{23}^n$ for arbitrary
$n \in \NN$, and choose any root lattice for~$L'$.

$\phantom{kludge}$

The second argument for Proposition~\ref{thm:prop1}
generalizes in a different direction.  We use the following notations.
For a collection $\Pi$ of sets, let
$\Union(\Pi)$ be their union $\cup_{\qfset{P}\in\Pi} \qfset{P}$;
and for a finite set $\qfset{P}$ of lattices,
let $\Prod(\qfset{P})$ be the direct sum
$\oplus_{L \in \qfset{P}} L$.  Say that two lattices $L,L'$
are \emph{coprime} if they have no indecomposable summands in common.

\begin{prop}\label{thm:prop3}
Let $A = \Prod(\qfset{P})$, where
$\qfset{P}$ is a finite set of pairwise coprime, unimodular lattices;
and let $\Pi$ be a family of subsets of~$\qfset{P}$
such that $\Union(\Pi) = \qfset{P}$.
Then $\s'_\marker \defeq \{\Prod(\qfset{R}) : \qfset{R} \in \Pi\}$
is an $\s$-criterion set for the set~$\s$ of
quadratic forms represented by~$A$.
Moreover, $\s'_\marker$ is a minimal $\s$-criterion set
if and only if $\Union(\Pi\setminus \{\qfset{R}\})$
is smaller than~$\qfset{P}$ for each $\qfset{R} \in \Pi$.
\end{prop}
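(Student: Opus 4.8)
The plan is to reduce every representation question to bookkeeping on indecomposable summands, exactly as in the second argument for Proposition~\ref{thm:prop1}. Two standard facts drive everything. First, if $M$ is unimodular and is represented by $Q$, then $M$ is an \emph{orthogonal direct summand} of $Q$: for any $q \in Q$ the projection $\pi(q)$ onto $M \otimes \QQ$ satisfies $(\pi(q),m)=(q,m)\in\ZZ$ for all $m\in M$, so $\pi(q)\in M^{*}=M$, whence $Q = M \oplus (M^{\perp}\cap Q)$. Second, by uniqueness of the decomposition into indecomposables, I would encode each lattice $M$ by the function $\mu_{M}$ recording, for each isomorphism type $I$ of indecomposable, its multiplicity in $M$. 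The first fact then says: if the unimodular $M$ is represented by $Q$ then $\mu_{Q}\ge \mu_{M}$ pointwise; and conversely $\mu_{Q}\ge \mu_{A}$ pointwise forces $A$ (unimodular) to be a summand of $Q$, hence represented. Because the members of $\qfset{P}$ are pairwise coprime, each indecomposable type $I$ with $\mu_{A}(I)>0$ occurs in a \emph{unique} $L\in\qfset{P}$, and there $\mu_{A}(I)=\mu_{L}(I)$; the same coprimality shows $\qfset{R}\mapsto\Prod(\qfset{R})$ is injective on subsets of $\qfset{P}$. I would first record that $\s'_\marker\subset\s$, since each $\Prod(\qfset{R})$ with $\qfset{R}\subseteq\qfset{P}$ is a summand of $A=\Prod(\qfset{P})$.

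For the criterion-set claim, suppose $Q$ represents $\Prod(\qfset{R})$ for every $\qfset{R}\in\Pi$; I want $\mu_{Q}\ge\mu_{A}$. Fix $I$ with $\mu_{A}(I)>0$, occurring in the unique $L\in\qfset{P}$ containing it. Since $\Union(\Pi)=\qfset{P}$, choose $\qfset{R}\in\Pi$ with $L\in\qfset{R}$; by coprimality $L$ is the only member of $\qfset{R}$ contributing $I$, so $\mu_{\Prod(\qfset{R})}(I)=\mu_{L}(I)=\mu_{A}(I)$. As $\Prod(\qfset{R})$ is unimodular and represented by $Q$, the first fact gives $\mu_{Q}(I)\ge\mu_{A}(I)$. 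Ranging over all such $I$ yields $\mu_{Q}\ge\mu_{A}$, so $Q$ represents $A$ and $\s'_\marker$ is an $\s$-criterion set.

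For the minimality equivalence I would argue both directions around the explicit witness $Q_{L_0}\defeq\Prod(\qfset{P}\setminus\{L_0\})$. By injectivity, deleting $\Prod(\qfset{R}_0)$ from $\s'_\marker$ is the same as deleting $\qfset{R}_0$ from $\Pi$. If $\Union(\Pi\setminus\{\qfset{R}_0\})\subsetneq\qfset{P}$, pick $L_0$ outside this union; then $Q_{L_0}$ represents every $\Prod(\qfset{R})$ with $\qfset{R}\in\Pi\setminus\{\qfset{R}_0\}$ (each such $\qfset{R}\subseteq\qfset{P}\setminus\{L_0\}$), yet $\mu_{Q_{L_0}}(I)=0<\mu_{L_0}(I)=\mu_{A}(I)$ for any $I$ occurring in $L_0$, so $Q_{L_0}$ fails to represent $A$; this exhibits the required witness and proves minimality. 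Conversely, if $\Union(\Pi\setminus\{\qfset{R}_0\})=\qfset{P}$ for some $\qfset{R}_0$, then the family $\Pi\setminus\{\qfset{R}_0\}$ still covers $\qfset{P}$, so the criterion-set argument applies verbatim and shows $\s'_\marker\setminus\{\Prod(\qfset{R}_0)\}$ is already an $\s$-criterion set, i.e.\ $\s'_\marker$ is not minimal. Together these give the stated ``if and only if.''

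The routine inputs (the projection argument and the uniqueness of indecomposable decomposition) are already in hand from Proposition~\ref{thm:prop1}, so the main thing to get right is the \emph{multiplicity bookkeeping}: I must track indecomposable summands \emph{with multiplicity} (a single $L$ may repeat an indecomposable) and lean on coprimality at each step to guarantee that each indecomposable type of $A$ is contributed by exactly one $L\in\qfset{P}$, and hence by exactly one member of any $\qfset{R}$ containing that $L$. The one genuinely delicate point is confirming that ``represents'' can be upgraded to ``is a direct summand'' precisely because the relevant forms are unimodular; without unimodularity the pointwise inequality $\mu_{Q}\ge\mu_{M}$ could fail, and the whole reduction would collapse.
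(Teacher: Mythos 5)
Your proof is correct and takes essentially the same route as the paper's: both upgrade ``represents'' to ``is an orthogonal direct summand'' via unimodularity, then invoke uniqueness of the decomposition into indecomposables together with pairwise coprimality (the paper phrases this as ``repeatedly applying'' the coprime-summand observation, you as pointwise multiplicity bookkeeping), and both settle minimality by deleting one $\qfset{R}$ from $\Pi$. Your explicit witness $\Prod(\qfset{P}\setminus\{L_0\})$ and your check that $\qfset{R}\mapsto\Prod(\qfset{R})$ is injective merely spell out details the paper's proof leaves implicit.
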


\begin{proof}
We repeatedly apply the observation that
if $\qfset{P}$ is a set of pairwise coprime lattices,
each of which is a direct summand of a lattice~$Q$,
then $\Prod(\qfset{P})$ is also a direct summand of~$Q$.
Since any unimodular sublattice of an integer-matrix lattice
is a direct summand, it follows that $Q$\/ represents
$\Prod(\qfset{R})$ for each $\qfset{R} \in \Pi$
$\Llra$ $Q$\/ represents each lattice in $\Union(\Pi) = \qfset{P}$
$\Llra$ $Q$\/ represents $\Prod(\qfset{P}) = A$.
That is, $\s'_\marker$ is a criterion set for~$A$.
Moreover, replacing $\Pi$ by any subset $\Pi' = \Pi \setminus \qfset{R}$
shows that $\{\Prod(\qfset{R}) : \qfset{R} \in \Pi'\}$
is a criterion set for $\Prod(\Union(\Pi'))$.
Thus $\s'_\marker$ is minimal if and only if
$\Union(\Pi \setminus \qfset{R}) \subsetneq \qfset{P}$
for each $\qfset{R} \in \Pi$.
\end{proof}

\emph{Examples.}  We may take for $\Pi$ any partition of~$\qfset{P}$,
and then $A = \Prod(\s_\marker') = \oplus_{L \in \s_\marker'} L$.
Proposition~\ref{thm:prop1} is
the special case $\qfset{P} = \{E_8, \ZZ\}$, $\Pi = \{\{E_8\}, \{\ZZ\}\}$.
(The similar case $\qfset{P} = \{E_8, \ZZ^8\}$, $\Pi = \{\{E_8\}, \{\ZZ^8\}\}$
was in effect used already by Oh \cite[Theorem 3.1]{Oh:MinRank} and the
third author \cite{Kom2} in the study of $8$-universality criteria.)
Since $\card{\qfset{P}}$ can be any natural number~$n$,
Proposition~\ref{thm:prop3} produces for each~$n$\/
a lattice~$A$ for which $\s$ has minimal criterion sets
of (at least) $n$\/ distinct cardinalities.

\section*{Remarks}

The examples presented here show that minimal $\s$-criterion sets
may vary in size.  Further examples can be obtained by mixing
the techniques of Theorem~\ref{thm:the} and
Propositions~\ref{thm:prop2} and~\ref{thm:prop3};
for instance,
$\{ \qf{1}\oplus \qf{1} \oplus \qf{2} \oplus E_8 \oplus \Lambda_{23} \}$
and
$\{
\qf{1}\oplus \qf{1}, \qf{2}\oplus \qf{2}\oplus \qf{2} \oplus E_8, \Lambda_{23}
\}$
are both minimal criterion sets for the set of lattices represented by
$\qf{1}\oplus \qf{1} \oplus \qf{2} \oplus E_8 \oplus \Lambda_{23}$.
However, it is unclear (and appears difficult
to characterize in general) for which $\s$ this phenomenon occurs.

For the sets $\s_n$ of rank-$n$ quadratic forms, criterion sets
are known only in the cases $n=1,2,8$ (see \cite{Bhargava:Fif,Conway:universality}, \cite{Kim:universal}, and \cite{Oh:MinRank}, respectively).   Few
criterion sets beyond those for $\s_n$ ($n=1,2,8$) have been explicitly
computed.

Meanwhile, in the cases $n=1,2,8$, the minimal $\s_n$-criterion
sets are known to be \emph{unique} (see~\cite{kim2004recent}, \cite{Kom},
and \cite{Kom2}), in which case the answer to the question we examine
is (trivially) affirmative.  But there is not yet a general characterization
of the~$\s$ that have unique minimal $\s$-criterion sets
(see~\cite{kim2004recent}).
It seems likely that such a result would be essential in making
progress towards a general answer to the question of Kim, Kim, and
Oh~\cite{Kim:finite} that we studied here.

\section*{Acknowledgements}

While working on this paper, Elkies was supported in part by NSF grant
DMS-0501029, and Kane and Kominers were supported in part by
NSF Graduate Research Fellowships.

\bibliographystyle{amsalpha}
\bibliography{qf}
\end{document}